\newcommand{\Ben}{\begin{enumerate}}
\newcommand{\Een}{\end{enumerate}}
\newcommand{\Bit}{\begin{itemize}}
\newcommand{\Eit}{\end{itemize}}
\newcommand{\Beq}{\begin{equation}}
\newcommand{\Eeq}{\end{equation}}
\newcommand{\Ba}{\begin{align*}}
\newcommand{\Ea}{\end{align*}}
\newcommand{\Mb}{\mathbf}
\newtheorem{Th}{Theorem}
\newtheorem{Rq}{Remark} 
\newtheorem{Def}{Definition}
\title{Geometric ergodicity for some space-time max-stable Markov chains}
\date{May 8, 2018}
\begin{document}
\author{
~~ Erwan Koch\footnote{EPFL (Chair of Statistics STAT): EPFL-SB-MATH-STAT, MA B1 433 (B\^atiment MA), Station 8, 1015 Lausanne, Switzerland. ETH Zurich (Department of Mathematics, RiskLab). \newline Email: erwan.koch@epfl.ch}
~~ Christian Y. Robert\footnote{ISFA Universit\'e Lyon 1, 50 Avenue Tony Garnier, 69366 Lyon cedex 07, France. \newline Email: christian.robert@univ-lyon1.fr}
}

\maketitle

\begin{abstract}
Max-stable processes are central models for spatial extremes. In this paper, we focus on some space-time max-stable models introduced in \cite{embrechts2016space}. The processes considered induce discrete-time Markov chains taking values in the space of continuous functions from the unit sphere of $\mathds{R}^3$ to $(0, \infty)$. We show that these Markov chains are geometrically ergodic. An interesting feature lies in the fact that the state space is not locally compact, making the classical methodology inapplicable. Instead, we use the fact that the state space is Polish and apply results presented in \cite{hairer2010p}.

\medskip

\noindent \textbf{Key words:} Geometric ergodicity; Markov chains with non locally compact state space; Space-time max-stable processes on a sphere; Spectral separability.
\end{abstract}

\section{Introduction}

Max-stable processes constitute an extension of multivariate extreme-value theory to the level of stochastic processes \citep[see, e.g.,][]{haan1984spectral, de2007extreme} and turn out to be fundamental for the modelling of spatial extremes. In the related literature, measurements are often assumed to be independent in time and, thus, only the spatial structure is studied \citep[see, e.g.,][]{padoan2010likelihood}. To the best of our knowledge, only \cite{davis2013max}, \cite{huser2014space}, \cite{buhl2015anisotropic} and \cite{embrechts2016space} propose space-time max-stable processes. The class of models introduced in \cite{embrechts2016space}, i.e. the space-time max-stable models with spectral separability, allows to overcome some drawbacks inherent to the approach taken in the other mentioned papers; see \cite{embrechts2016space} for details. 

In this study, we focus on a generalised version of the subclass of ``models of type 2'' defined in \cite{embrechts2016space}, Section 2.1.2. One remarkable feature of the associated models is to be space-time max-stable models on the unit sphere of $\mathds{R}^3$. Although max-stable processes on a sphere have, to the best of our knowledge, only been considered in \cite{embrechts2016space}, such processes can be relevant for applications due to the natural spherical shape of planets and stars. As will be seen, in the discrete-time case, some of the models mentioned directly above induce Markov chains taking values in the space of continuous functions from the unit sphere of $\mathds{R}^3$ to $(0, \infty)$. For an excellent review of Markov chains theory, we refer the reader to \cite{meyn2009markov}. The main result of the present paper is the geometric ergodicity of the just mentioned Markov chains. Since the state space is not locally compact, geometric ergodicity cannot be obtained using classical results, contained, e.g., in \cite{meyn2009markov}. Instead, we take advantage of the fact that the state space is Polish and apply results for Markov chains with Polish state spaces to be found in \cite{hairer2010p}. Conditions for geometric ergodicity of Markov chains with Polish state spaces have been barely considered in the literature so far. Geometric ergodicity is a very powerful property: under some specific moment condition, any transformation of a geometrically ergodic Markov chain satisfies a central limit theorem; see, e.g., \cite{ibragimov1971independent}, Chapter 19, Section 1, for uniformly geometrically ergodic (i.e. uniformly ergodic) Markov chains and \cite{haggstrom2005central}, especially Theorem 1.2, for geometrically ergodic Markov chains. Moreover, \cite{miasojedow2014hoeffding} has shown that some functions of $\pi$-irreducible reversible geometrically ergodic Markov chains with Polish state spaces satisfy an inequality of Hoeffding type; see Remark 3.2 and Theorem 3.3. Hence, geometric ergodicity allows to obtain results for statistics based on the Markov chain and to carry out statistical inference.

The remainder of the paper is organised as follows. Section \ref{Sec_Space_Time_Max_Stable_Sphere} describes the previously mentioned class of ``models of type 2'' as well as its generalised version. Then, our Markov chains are presented and their geometric ergodicity is shown in Section \ref{Sec_Geometric_Ergodicity}. Finally, Section \ref{Sec_Conclusion} provides a short summary as well as some perspectives. Throughout the paper, ``$\bigvee$'' denotes the supremum when the latter is taken over a countable set. Additionally, $\overset{d}{=}$ stands for equality in distribution. In the case of stochastic processes, this must be understood as equality of finite-dimensional distributions.

\section{The subclass of space-time max-stable models of type 2 and its generalisation}
\label{Sec_Space_Time_Max_Stable_Sphere}

First, we recall the definition of the subclass composed of the ``models of type 2'' specified in \cite{embrechts2016space}, Section 2.1.2. Before doing so, we need to introduce some notations and concepts. Let $\mathcal{T}$ be the set of time indices. The mentioned models are either continuous-time ($\mathcal{T}=\mathds{R}$) or discrete-time ($\mathcal{T}=\mathds{Z}$). Let $\lambda$ be the Lebesgue measure on $\mathds{R}$ (case $\mathcal{T}=\mathds{R}$) or the counting measure $\sum_{z\in \mathds{Z}}\delta_{\{z\}}$, where $\delta$ stands for the Dirac measure (case $\mathcal{T}=\mathds{Z}$). Denoting by $\|.\|$ the Euclidean norm, we introduce $\mathds{S}^{2}= \{ \Mb{x} \in \mathds{R}^3: \| \Mb{x} \|=1 \}$, the unit sphere in $\mathds{R}^3$. Moreover, let $\lambda_{\mathds{S}^2}$ be the Lebesgue measure on $\mathds{S}^{2}$. Let $f$ be the von Mises-Fisher probability density function \citep[see, e.g.,][Section 9.3.2]{mardia2009directional} on $\mathds{S}^{2}$ with
parameters $\boldsymbol{\mu }\in \mathds{S}^{2}$ and $\kappa \geq 0$:
\begin{equation}
\label{Eq_von_Mises_Fisher}
f(\mathbf{x};\boldsymbol{\mu },\kappa )=\frac{\kappa }{4\pi \sinh ( \kappa ) }\exp
\left( \kappa \boldsymbol{\mu }' \mathbf{x}\right) \text{,}\quad 
\mathbf{x}\in \mathds{S}^{2},
\end{equation}
where $\sinh$ designates the hyperbolic sine function and $'$ denotes transposition.
The parameters $\boldsymbol{\mu }$ and $\kappa $ are termed the mean direction
and concentration parameter, respectively. The higher the value of $\kappa $, the greater the concentration of the distribution around the mean direction 
$\boldsymbol{\mu }$. The distribution is uniform on the sphere for $\kappa =0$ and unimodal for $\kappa >0$. In addition, for $\mathbf{u}=(u_{x},u_{y},u_{z})' \in $ $\mathds{S}^{2}$, let $R_{\theta ,\mathbf{u}}$ be the rotation matrix of angle $\theta \in \mathds{R}$
around an axis in the direction of $\mathbf{u}$. The latter is written
$$
R_{\theta ,\mathbf{u}}=\cos \theta I_{3}+\sin \theta \lbrack \mathbf{u}
]_{\times }+(1-\cos \theta ) \Mb{u} \Mb{u}',
$$
where $I_{3}$ is the identity matrix of $\mathds{R}^{3}$ and $[\mathbf{u}]_{\times }$ the cross product matrix of $\mathbf{u}$, defined by
$$
\lbrack \mathbf{u}]_{\times }=
\begin{pmatrix}
0 & -u_{z} & u_{y} \\ 
u_{z} & 0 & -u_{x} \\ 
-u_{y} & u_{x} & 0
\end{pmatrix}.
$$
Furthermore, let $g$ be a probability density function (case $\mathcal{T}=\mathds{R}$) or a probability mass function (case $\mathcal{T}=\mathds{Z}$).
Finally, we recall the way we define a Poisson point process on $\mathds{Z}$ in \cite{embrechts2016space}. Let $\left (
N_{k}\right ) _{k\in \mathds{Z}}$ be independent and identically distributed Poisson$(1)$, where, for $\lambda_p>0$, Poisson$(\lambda_p)$ stands for the Poisson distribution with parameter $\lambda_p$. The process $N$ defined by $N\left( A\right) =\sum_{k\in A}N_{k}$, $A\subset \mathds{Z}$, is a Poisson point process on $\mathds{Z}$ with intensity one. Indeed, the quantity $N(A)$ is Poisson distributed with parameter $\lambda(A)$ and, for any $l \geq 1$ and $A_1, \dots, A_l$ disjoint sets in $\mathds{Z}$, the $N(A_i)$, $i=1, \dots, l,$ are independent random variables.
The definition of the ``models of type 2'' introduced in \cite{embrechts2016space} is recalled immediately below.
\begin{Def}
The ``models of type 2'' in \cite{embrechts2016space} are defined by
\begin{equation}
\left( X(t,\Mb{x}) \right)_{(t,\Mb{x})\in \mathcal{T}\times \mathds{S}^{2}}=\left( \bigvee_{i=1}^{\infty } \{ U_{i} g(t-B_i) f \left( R_{\theta (t-B_i),\mathbf{u}} \mathbf{x}; \boldsymbol{\mu}_i, \kappa \right) \} \right)_{(t,\Mb{x})\in \mathcal{T}\times \mathds{S}^{2}},
\label{SpectralRepresentation}
\end{equation}
where $(U_{i},B_{i},\boldsymbol{\mu}_i)_{i\geq 1}$ are the points of a Poisson point process on $(0,\infty )\times \mathcal{T} \times \mathds{S}^{2}$ with intensity $u^{-2} \mathrm{d}u\times \lambda(\mathrm{d}b) \times \lambda_{\mathds{S}^2}(\mathrm{d}\boldsymbol{\mu})$.
\end{Def}

Second, the class of processes presented right above can be extended by allowing in \eqref{SpectralRepresentation} any probability density function $\tilde{f}: \mathds{S}^2 \to [0, \infty)$ involving one mean direction parameter in $\mathds{S}^2$, and not only the function $f$ defined in \eqref{Eq_von_Mises_Fisher}. The resulting models constitute the so called generalised subclass of models of type 2. For several examples of probability density functions on spheres, we refer the reader to \cite{mardia2009directional}, Section 9.3. 

\begin{Rq}
The generalised subclass of models of type 2 is included in the class of space-time max-stable models with spectral separability introduced in \cite{embrechts2016space}, Definition 1. For an explanation about the interest of this class of space-time max-stable models compared to those previously introduced in the literature and an interpretation of its different components, we refer the reader to \cite{embrechts2016space}.
\end{Rq}

\section{Markovian models and geometric ergodicity}
\label{Sec_Geometric_Ergodicity}

In this section, if $\mathcal{T}=\mathds{R}$, let $g$ be the density of a standard exponential
random variable whereas if $\mathcal{T}=\mathds{Z}$, let $g$
correspond to the probability mass function of a geometric random variable: 
\begin{equation}
g(t)=\left\{ 
\begin{array}{ll}
\nu \exp (-\nu t)\ \mathds{I}_{\{t\geq 0\}} & \mbox{if }\mathcal{T}=\mathds{R
}, \\ 
(1-\phi )\phi ^{t}\ \mathds{I}_{\{t\geq 0\}} & \mbox{if }\mathcal{T}=\mathds{
Z},
\end{array}
\right.   \label{Eq_Function_g}
\end{equation}
where $\nu >0$ and $\phi \in (0,1)$. Let us denote by $a$ the constant $\exp(-\nu)$ if $\mathcal{T} = \mathds{R}$ and the constant $\phi$ if $\mathcal{T} = \mathds{Z}$. Combining \eqref{SpectralRepresentation} and \eqref{Eq_Function_g}, the models of type 2 described in Section \ref{Sec_Space_Time_Max_Stable_Sphere} become, for $t \in \mathcal{T}$ and $\Mb{x} \in \mathds{S}^2$,
\begin{equation}
X(t,\mathbf{x})=\left\{ 
\begin{array}{ll}
\bigvee_{i=1}^{\infty} \left\{ U_{i}\nu \exp (-\nu (t-B_{i}) ) \mathds{I}_{\{t-B_{i}\geq 0\}} f(R_{\theta(t-B_i), \Mb{u}} \mathbf{x}; \boldsymbol{\mu}_i, \kappa) \right\}  & \mbox{if }\mathcal{T}=\mathds{R}, \\ 
\bigvee_{i=1}^{\infty} \left\{ U_{i}\phi (1-\phi )^{t-B_{i}}\mathds{I}
_{\{t-B_{i}\geq 0\}}f(R_{\theta(t-B_i), \Mb{u}} \mathbf{x}; \boldsymbol{\mu}_i, \kappa)\right\}  & \mbox{if }\mathcal{T}=\mathds{Z},
\end{array}
\right.   \label{Model_Sphere_Markovian}
\end{equation}
where $(U_{i},B_i, \boldsymbol{\mu }_{i})_{i\geq 1}$ are the points of a Poisson point process on $
(0,\infty )\times \mathcal{T} \times \mathds{S}^{2}$ with intensity $u^{-2}\mathrm{d}u\times \lambda(\mathrm{d}b) \times \lambda _{\mathds{S}^2}(\mathrm{d}\boldsymbol{\mu})$ and $f$ is given by \eqref{Eq_von_Mises_Fisher}. Similarly as in the case of Markovian models of types 1 and 4 defined in \cite{embrechts2016space}, Section 3.1, we have the following result.
\begin{Th}
\label{Th_Iteration} 
The process $X((t, \Mb{x}))_{(t, \Mb{x}) \in \mathcal{T} \times \mathds{S}^2}$ defined in \eqref{Model_Sphere_Markovian} satisfies, for all $t,s\in \mathcal{T}$ such that $s>0$ and $\Mb{x} \in \mathds{S}^{2}$, 
\begin{equation}
\label{Eq_Iteration}
X(t,\mathbf{x})=\max \left \{ a^{s}X(t-s,R_{\theta s, \Mb{u}}\mathbf{x}),(1-a^{s})Z(t,\mathbf{x}) \right \},  
\end{equation}
where the process $(Z(t,\mathbf{x}))_{\mathbf{x}\in \mathds{S}^{2}}$ is
independent of $(X(t-s,\mathbf{x}))_{\mathbf{x}\in \mathds{S}^{2}}$ and
\begin{equation}
\left( Z(t,\mathbf{x})\right)_{\mathbf{x} \in \mathds{S}^{2}} \overset{d}{=}\left( \bigvee_{i=1}^{\infty } \left \{ U_{i} f(\Mb{x}; \boldsymbol{\mu}_i, \kappa) \right \} \right)_{\mathbf{x} \in \mathds{S}^{2}},  
\label{SpectralrepresentationZ}
\end{equation}
where $(U_{i},\boldsymbol{\mu}_{i})_{i\geq 1}$ are the points of a Poisson point process on $(0,\infty
)\times \mathds{S}^{2}$ with intensity $u^{-2}\mathrm{d}u\times \lambda _{\mathds{S}^2}(\mathrm{d}\boldsymbol{\mu})$. 
\end{Th}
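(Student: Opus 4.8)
The plan is to realise $X$, $X(t-s,\cdot)$ and $Z$ on the same Poisson point process $\Pi=\{(U_i,B_i,\boldsymbol{\mu}_i):i\ge 1\}$ on $(0,\infty)\times\mathcal{T}\times\mathds{S}^2$ with intensity $u^{-2}\,\mathrm{d}u\times\lambda(\mathrm{d}b)\times\lambda_{\mathds{S}^2}(\mathrm{d}\boldsymbol{\mu})$, and to split the defining supremum of $X(t,\mathbf{x})$ in \eqref{Model_Sphere_Markovian} into the part $Y_1(\mathbf{x})$ coming from the atoms with $B_i\le t-s$ and the part $Y_2(\mathbf{x})$ coming from those with $t-s<B_i\le t$; atoms with $B_i>t$ contribute nothing since then $g(t-B_i)=0$ by \eqref{Eq_Function_g}. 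I would then prove \eqref{Eq_Iteration} by showing that $Y_1(\mathbf{x})=a^{s}X(t-s,R_{\theta s,\mathbf{u}}\mathbf{x})$ holds pathwise, and that $Z(t,\cdot):=(1-a^{s})^{-1}Y_2(\cdot)$ is, as a process, distributed as in \eqref{SpectralrepresentationZ} and independent of $X(t-s,\cdot)$.

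For the first claim, the two facts I would use are: since $\theta$ and $\mathbf{u}$ are fixed and rotations about a common axis add their angles, $R_{\theta(t-B_i),\mathbf{u}}=R_{\theta((t-s)-B_i),\mathbf{u}}R_{\theta s,\mathbf{u}}$; and, by the exponential (resp.\ geometric) shape of $g$ in \eqref{Eq_Function_g}, $g(t-B_i)=a^{s}\,g((t-s)-B_i)$ whenever $B_i\le t-s$. Substituting both identities into the definition of $Y_1$ and comparing the result with \eqref{Model_Sphere_Markovian} evaluated at $(t-s,R_{\theta s,\mathbf{u}}\mathbf{x})$ gives $Y_1(\mathbf{x})=a^{s}X(t-s,R_{\theta s,\mathbf{u}}\mathbf{x})$ directly.

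For $Y_2$, independence of $Z(t,\cdot)$ from $X(t-s,\cdot)$ is immediate: $X(t-s,\cdot)$ is a functional of the atoms of $\Pi$ in $(0,\infty)\times(-\infty,t-s]\times\mathds{S}^2$, while $Y_2$ is a functional of the atoms in the disjoint strip $(0,\infty)\times(t-s,t]\times\mathds{S}^2$, and a Poisson point process is independent over disjoint sets. For the law, I would apply the mapping theorem to the restriction of $\Pi$ to that strip under $(u,b,\boldsymbol{\mu})\mapsto(u\,g(t-b),\,b,\,R_{\theta(t-b),\mathbf{u}}^{-1}\boldsymbol{\mu})$: using the rotational invariance $f(R\mathbf{x};R\boldsymbol{\mu},\kappa)=f(\mathbf{x};\boldsymbol{\mu},\kappa)$ of the von Mises-Fisher density \eqref{Eq_von_Mises_Fisher}, together with the rotation invariance of $\lambda_{\mathds{S}^2}$, one rewrites $Y_2(\mathbf{x})=\bigvee_i V_i\,f(\mathbf{x};\boldsymbol{\nu}_i,\kappa)$ over the image process, whose intensity equals $g(t-b)\,v^{-2}\,\mathrm{d}v\times\lambda(\mathrm{d}b)\times\lambda_{\mathds{S}^2}(\mathrm{d}\boldsymbol{\nu})$ by the scaling rule $u^{-2}\,\mathrm{d}u=g(t-b)\,v^{-2}\,\mathrm{d}v$ at fixed $b$. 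Projecting out $b$ and using the key identity $\int_{(t-s,t]}g(t-b)\,\lambda(\mathrm{d}b)=1-a^{s}$ leaves a Poisson point process on $(0,\infty)\times\mathds{S}^2$ with intensity $(1-a^{s})\,v^{-2}\,\mathrm{d}v\times\lambda_{\mathds{S}^2}(\mathrm{d}\boldsymbol{\nu})$; a final rescaling of the first coordinate by $1/(1-a^{s})$ brings this to the form of \eqref{SpectralrepresentationZ} and yields $Y_2\overset{d}{=}(1-a^{s})Z(t,\cdot)$. Combined with the previous step, this is \eqref{Eq_Iteration}.

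I expect the change-of-variables computation for $Y_2$ to be the only step requiring real care: one has to track how $u^{-2}\,\mathrm{d}u\times\lambda(\mathrm{d}b)\times\lambda_{\mathds{S}^2}(\mathrm{d}\boldsymbol{\mu})$ transforms under the simultaneous rescaling of $u$ by $g(t-b)$ and rotation of $\boldsymbol{\mu}$, and then carry out the integral (resp.\ sum) over $b$ to produce the constant $1-a^{s}$. The remaining ingredients --- angle-additivity of coaxial rotations, memorylessness of the weights, and independence of a Poisson point process over the disjoint time strips --- are routine, and the a.s.\ finiteness and continuity in $\mathbf{x}$ of the suprema involved are inherited from the corresponding properties of $X$.
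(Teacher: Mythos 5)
Your proposal is correct, and its skeleton coincides with the argument the paper delegates to \cite{embrechts2016space} (Theorem 3, bullet (i)): split the Poisson points by time strips, use the memorylessness $g(t-b)=a^{s}g(t-s-b)$ for $b\le t-s$ together with the angle-additivity $R_{\theta(t-b),\mathbf{u}}=R_{\theta(t-s-b),\mathbf{u}}R_{\theta s,\mathbf{u}}$ to recover $a^{s}X(t-s,R_{\theta s,\mathbf{u}}\mathbf{x})$, and get independence from the Poisson property over disjoint strips. Where you genuinely deviate is in identifying the law of the innovation part: you work at the level of the point process, pushing the restricted process forward under $(u,b,\boldsymbol{\mu})\mapsto(ug(t-b),b,R_{\theta(t-b),\mathbf{u}}^{-1}\boldsymbol{\mu})$ and exploiting the pointwise equivariance $f(R\mathbf{x};R\boldsymbol{\mu},\kappa)=f(\mathbf{x};\boldsymbol{\mu},\kappa)$ of the density \eqref{Eq_von_Mises_Fisher} together with rotation invariance of $\lambda_{\mathds{S}^2}$, then integrate out $b$ to get the factor $1-a^{s}$; the paper instead verifies \eqref{SpectralrepresentationZ} through finite-dimensional distributions using the integral identity \eqref{Eq_Stability_Fisher_vonMises}. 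Your route buys something stronger and more self-contained: an almost-sure coupling in which $Z(t,\cdot)$ is explicitly constructed on the same probability space, rather than only a distributional identification. The paper's route buys generality: the fidi argument only needs the integral-level invariance \eqref{Eq_Stability_Fisher_vonMises}, which is exactly the hypothesis retained for the generalised subclass with an arbitrary spectral density $\tilde f$, whereas your pointwise equivariance is a special feature of the von Mises--Fisher family (it depends only on $\boldsymbol{\mu}'\mathbf{x}$) and would have to be replaced by the integral identity to cover that extension. As a minor polish, you should note that $g(t-b)>0$ on the strip $(t-s,t]$ (so your rescaling map is well defined) and that the image intensities are $\sigma$-finite so the mapping theorem applies; both are immediate here.
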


\begin{proof}
The proof is similar as that of Theorem 3, Bullet (i), in \cite{embrechts2016space}. We highlight now the main differences. We consider, for $t,b \in \mathcal{T}$ and $\Mb{x} \in \mathds{S}^2$, $R_{(t, b)} \Mb{x}=R_{\theta(t-b), \Mb{u}} \Mb{x}$, where $\theta \in \mathds{R}$ and $\Mb{u} \in \mathds{S}^2$, instead of $R_{(t,b)} \mathbf{x}=\mathbf{x}-(t-b)\boldsymbol{\tau }$, where $\boldsymbol{\tau }\in \mathds{R}^{2}$. Furthermore, in order to establish \eqref{Eq_Iteration}, we use the fact that, for all $t,s,B_i \in \mathcal{T}$, $\theta \in \mathds{R}$, $\Mb{u} \in \mathds{S}^2$ and $\Mb{x} \in  \mathds{S}^2$, $R_{\theta(s+t-s-B_i), \Mb{u}} \Mb{x}=R_{\theta(t-s-B_i), \Mb{u}} (R_{\theta s, \Mb{u}} \Mb{x})$.
Finally, to prove \eqref{SpectralrepresentationZ}, we take advantage of the fact that, for all $M \in \mathds{N} \backslash \{0\}$, $\Mb{x}_1, \dots, \Mb{x}_M \in \mathds{R}^2$, $t, b \in \mathcal{T}$, $\theta \in \mathds{R}$, $\Mb{u} \in \mathds{S}^2$ and $\kappa \geq 0$, 
\Beq
\label{Eq_Stability_Fisher_vonMises}
\int_{\mathds{S}^2} \bigvee_{m=1}^M \{ f(R_{\theta(t-b), \Mb{u}} \Mb{x}_m; \boldsymbol{\mu}, \kappa) \} \lambda_{\mathds{S}^2}(\mathrm{d} \boldsymbol{\mu}) = \int_{\mathds{S}^2} \bigvee_{m=1}^M \{ f(\Mb{x}_m; \boldsymbol{\mu}, \kappa) \} \lambda_{\mathds{S}^2}(\mathrm{d} \boldsymbol{\mu}).
\Eeq
The latter inequality has been shown in the proof of Theorem 1 in \cite{embrechts2016space}.
\end{proof}

Owing to the results in Theorem \ref{Th_Iteration} (especially \eqref{Eq_Iteration}), we focus in the remainder of the paper on the Markov chain $\left( ( X(t,\mathbf{x}) )_{\mathbf{x}\in \mathds{S}^{2}}\right)_{t\in \mathds{Z}}$ satisfying, for all $t \in \mathds{Z}$ and $\Mb{x} \in \mathds{S}^{2}$, 
\Beq
\label{Eq_Recurrence_Equation_Lag_1}
X(t,\mathbf{x})=\max \{ aX(t-1,R_{\theta ,\mathbf{u}}\mathbf{x}),(1-a)Z(t,\mathbf{x}) \},
\Eeq 
where the $(Z(t,\mathbf{x}))_{\mathbf{x}\in \mathds{S}^{2}}$, $t \in \mathds{Z}$, are independent replications of the process $(Z(\Mb{x}))_{\Mb{x} \in \mathds{S}^2}$ defined by
\begin{equation}
\left( Z(\mathbf{x})\right)_{\mathbf{x} \in \mathds{S}^{2}} = \left( \bigvee_{i=1}^{\infty } \left \{ U_{i} f(\Mb{x}; \boldsymbol{\mu}_i, \kappa) \right \} \right)_{\mathbf{x} \in \mathds{S}^{2}},  
\label{Eq_Definition_Z}
\end{equation}
where $(U_{i},\boldsymbol{\mu}_{i})_{i\geq 1}$ are the points of a Poisson point process on $(0,\infty
)\times \mathds{S}^{2}$ with intensity $u^{-2}\mathrm{d}u\times \lambda_{\mathds{S}^2}(\mathrm{d}\boldsymbol{\mu})$. Now, let $\mathcal{C}_{\mathds{S}^{2}}=\mathcal{C}(\mathds{S}^{2},(0, \infty))$ be the space of continuous functions from $\mathds{S}^{2}$ to $(0, \infty)$ with the topology induced by the uniform metric, i.e. $d(h_{1},h_{2})=\left\Vert h_{1}-h_{2}\right\Vert _{\infty }$, where, for $h\in \mathcal{C}_{\mathds{S}^{2}}$, $\left\Vert h\right\Vert _{\infty}=\sup_{\mathbf{x}\in \mathds{S}^{2}} \{ |h(\mathbf{x})| \}$. Since $Z$ has standard Fr\'echet margins, it is almost surely (a.s.) positive. Moreover, as $f$ is defined on a compact set and is continuous, it is bounded. Hence, using similar arguments as for Theorem 4 in \cite{schlather2002models}, there exists an a.s. finite integer $I$ such that 
$$ \left( Z (\mathbf{x})\right)_{\mathbf{x} \in \mathds{S}^{2}}= \left( \bigvee_{i=1}^{I} \left \{ U_{i} f(\Mb{x}; \boldsymbol{\mu}_i, \kappa) \right \} \right)_{\mathbf{x} \in \mathds{S}^{2}}.$$ 
Accordingly, since $f$ is continuous, we directly obtain that $Z$ is sample-continuous. Hence, it follows from \eqref{Eq_Recurrence_Equation_Lag_1} that the Markov chain $\left( ( X(t,\mathbf{x}) )_{\mathbf{x}\in \mathds{S}^{2}}\right)_{t\in \mathds{Z}}$ takes values in $\mathcal{C}_{\mathds{S}^{2}}$. In addition, $X$ is time-homogeneous since the distribution of the innovation processes $(Z(t,\Mb{x}))_{\mathbf{x}\in \mathds{S}^{2}}$, $t \in \mathds{Z}$, does not depend on $t$.
We have the following result.
\begin{Th}
Let $(Z(t,\mathbf{x}))_{\mathbf{x}\in \mathds{S}^{2}}$, $t \in \mathds{Z}$, be independent replications of the process $(Z(\Mb{x}))_{\Mb{x} \in \mathds{S}^2}$ defined by \eqref{Eq_Definition_Z}. The Markov chain $X$ has a unique invariant probability measure on $\mathcal{C}_{\mathds{S}^{2}}$ which is entirely characterised by the finite-dimensional distributions of the process
\begin{equation}
\label{Eq_Max_Integral_Representation}
\left( \bigvee_{j=0}^{\infty} \left\{ a^{j}(1-a)Z(t-j,R_{\theta j, \Mb{u}} \mathbf{x}) \right\} \right)_{\mathbf{x} \in \mathds{S}^{2}},
\end{equation}
for any $t \in \mathds{Z}$.
\end{Th}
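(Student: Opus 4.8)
The plan is to exhibit the stationary law explicitly as the distribution of the process appearing in \eqref{Eq_Max_Integral_Representation} and then to prove it is the only invariant one. Write, for $t \in \mathds{Z}$ and $\Mb{x} \in \mathds{S}^2$,
\[
Y_t(\Mb{x}) := \bigvee_{j=0}^{\infty} \left\{ a^j (1-a)\, Z(t-j, R_{\theta j, \Mb{u}}\, \Mb{x}) \right\}.
\]
First I would show that $Y_t \in \mathcal{C}_{\mathds{S}^{2}}$ almost surely. Since the von Mises--Fisher density \eqref{Eq_von_Mises_Fisher} attains a maximum value $f_{\max}=\kappa e^{\kappa}/(4\pi\sinh\kappa)$ that does not depend on the mean direction, the Poisson representation \eqref{Eq_Definition_Z} gives $\mathbb{P}(\|Z(s,\cdot)\|_\infty\le z)=\exp(-4\pi f_{\max}/z)$, hence $\mathbb{P}(\|Z(s,\cdot)\|_\infty> z)\le 4\pi f_{\max}/z$; the variables $\|Z(s,\cdot)\|_\infty$, $s\in\mathds{Z}$, being i.i.d., the Borel--Cantelli lemma yields $a^j\|Z(t-j,\cdot)\|_\infty\to 0$ a.s. As $Z(t,\cdot)$ is a.s. continuous and strictly positive on the compact set $\mathds{S}^2$ (recall that \eqref{Eq_Definition_Z} reduces a.s. to a finite maximum and that $f>0$), one has $m_0:=\inf_{\Mb{x}\in\mathds{S}^2}Z(t,\Mb{x})>0$ a.s., so all but finitely many terms of the above supremum are a.s. dominated by the $j=0$ term $(1-a)Z(t,\Mb{x})$. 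Therefore $Y_t=\bigvee_{j=0}^{J-1}\{a^j(1-a)Z(t-j,R_{\theta j,\Mb{u}}\cdot)\}$ for some a.s. finite random $J$, a finite maximum of continuous positive functions, whence $Y_t\in\mathcal{C}_{\mathds{S}^{2}}$ a.s.

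Next I would check that $(Y_t)_{t\in\mathds{Z}}$ is a stationary realisation of the chain \eqref{Eq_Recurrence_Equation_Lag_1}. Stationarity is clear, since $Y_t$ is a fixed shift-equivariant measurable functional of the i.i.d. sequence $(Z(s,\cdot))_{s\in\mathds{Z}}$; denote the common law of $Y_t$ by $\pi^\ast$. Isolating the term $j=0$, substituting $k=j-1$, and using the semigroup identity $R_{\theta(k+1),\Mb{u}}=R_{\theta k,\Mb{u}}R_{\theta,\Mb{u}}$, one gets
\[
Y_t(\Mb{x}) = \max\left\{ a\, Y_{t-1}(R_{\theta,\Mb{u}}\,\Mb{x}),\ (1-a) Z(t,\Mb{x}) \right\},
\]
which is exactly \eqref{Eq_Recurrence_Equation_Lag_1}; since $Y_{t-1}$ is measurable with respect to $(Z(s,\cdot))_{s\le t-1}$, it is independent of $Z(t,\cdot)$, so pushing $\pi^\ast$ through the transition kernel returns $\pi^\ast$, i.e. $\pi^\ast$ is invariant.

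For uniqueness, let $\pi$ be any invariant probability measure on $\mathcal{C}_{\mathds{S}^{2}}$, start the chain from $X_0\sim\pi$ with fresh i.i.d. innovations $Z(1,\cdot),Z(2,\cdot),\dots$, and iterate \eqref{Eq_Recurrence_Equation_Lag_1} $n$ times:
\[
X_n(\Mb{x}) = \max\left\{ a^n X_0(R_{\theta n,\Mb{u}}\,\Mb{x}),\ \bigvee_{j=0}^{n-1}\left\{ a^j(1-a) Z(n-j, R_{\theta j,\Mb{u}}\,\Mb{x}) \right\} \right\}.
\]
The first term is bounded by $a^n\|X_0\|_\infty$ with $\|X_0\|_\infty$ a fixed, a.s. finite random variable, hence tends to $0$ in probability; by the i.i.d. property, for any finite collection of sites the second term has the same joint distribution as the partial maximum over $j=0,\dots,n-1$ in the representation \eqref{Eq_Max_Integral_Representation} of $Y_0$, which increases a.s. to $Y_0$. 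Slutsky's lemma then shows that the finite-dimensional distributions of $X_n$ converge to those of $Y_0$, that is, to $\pi^\ast$; but $X_n\sim\pi$ for every $n$ by invariance, so $\pi$ and $\pi^\ast$ have identical finite-dimensional distributions. Since $\mathcal{C}_{\mathds{S}^{2}}$ is Polish and its Borel $\sigma$-algebra is generated by the evaluation maps $h\mapsto h(\Mb{x})$ (even by those over a countable dense subset of $\mathds{S}^2$, because $\|h\|_\infty$ is such a countable supremum), a Borel probability measure on $\mathcal{C}_{\mathds{S}^{2}}$ is determined by its finite-dimensional distributions; hence $\pi=\pi^\ast$, and the invariant measure is characterised by the finite-dimensional distributions of \eqref{Eq_Max_Integral_Representation}, as claimed.

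I expect the first step to be the main obstacle: proving that the countable maximum defining $Y_t$ is a.s. finite and, above all, a.s. sample-continuous, so that it genuinely lives in $\mathcal{C}_{\mathds{S}^{2}}$. The delicate point is that $\|Z(s,\cdot)\|_\infty$ is heavy-tailed (Fréchet-distributed), so the argument must combine the geometric decay of the weights $a^j$ with the a.s. strict positivity of $\inf_{\Mb{x}}Z(t,\Mb{x})$ to force only finitely many innovations to be relevant; once that is in place, the remaining steps are routine manipulations using the rotation semigroup, the i.i.d. structure of the innovations, and the standard fact that laws on a Polish function space are pinned down by their finite-dimensional distributions.
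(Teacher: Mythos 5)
Your proof is correct and follows essentially the same route as the paper: the paper verifies that \eqref{Eq_Max_Integral_Representation} solves \eqref{Eq_Recurrence_Equation_Lag_1} via the rotation identity $R_{\theta j,\Mb{u}}R_{\theta,\Mb{u}}=R_{\theta(j+1),\Mb{u}}$, invokes the argument of Proposition~2.2 of Davis and Resnick (1989) for existence/uniqueness of the stationary solution, and concludes by characterisation through finite-dimensional distributions on the Polish space $\mathcal{C}_{\mathds{S}^2}$. You simply make the cited Davis--Resnick step explicit (killing the initial condition by $a^n$ and passing to the limit of the partial maxima) and additionally check that the infinite maximum is a.s.\ a finite maximum lying in $\mathcal{C}_{\mathds{S}^2}$, which is a welcome but not divergent addition.
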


\begin{proof}
Using similar arguments as in the proof of Proposition 2.2 in \cite{davis1989basic}, it is easily shown that \eqref{Eq_Recurrence_Equation_Lag_1} has a unique time-stationary\footnote{In this paper, stationarity refers to strict stationarity.} solution. This yields that the Markov chain $X$ has a unique invariant probability measure. Moreover, using the fact that, for all $\theta \in \mathds{R}$, $\Mb{u} \in \mathds{S}^2$ and $j =0, 1, \dots$, $R_{\theta j, \Mb{u}} R_{\theta, \Mb{u}}=R_{\theta(j+1), \Mb{u}}$, it is readily shown that the process in \eqref{Eq_Max_Integral_Representation} is a time-stationary solution of \eqref{Eq_Recurrence_Equation_Lag_1}. From the uniqueness of the solution, we deduce that the Markov chain $X$ has at each date the same finite-dimensional distributions as \eqref{Eq_Max_Integral_Representation}. Finally, the distribution (in the sense of the induced probability measure on $\mathcal{C}_{\mathds{S}^{2}}$) of $X$ is entirely characterised by its finite-dimensional distributions. This concludes the proof. 
\end{proof}

It is clear that $\mathcal{C}_{\mathds{S}^{2}}$ equipped with the previously defined uniform norm is an infinite-dimensional normed vector space. Hence, Riesz Theorem \citep[see, e.g.,][Section 1.3.14]{aldrovandi2017introduction} immediately gives that this space is not locally compact. Thus, the most classical results about geometric ergodicity of Markov chains, to be found e.g. in \cite{meyn2009markov}, cannot be applied. However, since $\mathds{S}^{2}$ is a compact and metrisable space and $(0, \infty)$ is a Polish space, Theorem 4.19 in \cite{kechris2012classical} gives that $C_{\mathds{S}^{2}}$ is a Polish space. Thus, instead, we use results for Markov chains with Polish state spaces. Such results are uncommon in the literature and here we use those by \cite{hairer2010p}.

Let $L$ be a function from $\mathcal{C}_{\mathds{S}^{2}}$ to $[0,\infty )$
and let us introduce a weighted supremum norm on the space of functions from 
$\mathcal{C}_{\mathds{S}^{2}}$ to $\mathds{R}$ in the following way. For any function $
\varphi :\mathcal{C}_{\mathds{S}^{2}}\mapsto \mathds{R}$, we define
\begin{equation*}
\left\Vert \varphi \right\Vert _{L}=\sup_{h\in \mathcal{C}_{\mathds{S}^{2}}} \left \{
\frac{|\varphi (h)|}{1+L(h)} \right \}.
\end{equation*}
Furthermore, for a probability measure $\eta$ on $\mathcal{C}_{\mathds{S}^{2}}$ and $\varphi :\mathcal{C}_{\mathds{S}^{2}}\mapsto \mathds{R}$, let us denote $\eta(\varphi)=\int_{\mathcal{C}_{\mathds{S}^{2}}}\varphi (y)\eta(\mathrm{d}y)$. We denote by $\mathcal{B}(\mathcal{C}_{\mathds{S}^{2}})$ the Borel $\sigma$-field of $\mathcal{C}_{\mathds{S}^{2}}$. For $h \in \mathcal{C}_{\mathds{S}^{2}}$, $B \in \mathcal{B}(\mathcal{C}_{\mathds{S}^{2}})$ and $n \in \mathds{N} \backslash \{ 0 \}$, let $\mathcal{P}(h,B)$ and $\mathcal{P}^n(h,B)$ be respectively the transition probability and the $n$-step transition probability from $h$ to $B$ associated with the Markov chain $\left( ( X(t,\mathbf{x}) )_{\mathbf{x}\in \mathds{S}^{2}}\right) _{t\in \mathds{Z}}$. Finally, we denote by $\mathcal{P}$ its transition kernel, defined by $\mathcal{P}= \{ \mathcal{P}(h,B), h \in \mathcal{C}_{\mathds{S}^{2}}, B \in \mathcal{B}(\mathcal{C}_{\mathds{S}^{2}}) \}$. Likewise, let $\mathcal{P}^n$ be its $n$-step transition kernel, written as $\mathcal{P}^n= \{ \mathcal{P}^n(h,B), h \in \mathcal{C}_{\mathds{S}^{2}}, B \in \mathcal{B}(\mathcal{C}_{\mathds{S}^{2}}) \}$. As in \cite{hairer2010p}, we also use the notations $\mathcal{P}$ and $\mathcal{P}^n$ for the operators defined, for any measurable function $V: \mathcal{C}_{\mathds{S}^{2}} \to \mathds{R}$ and any $h\in \mathcal{C}_{\mathds{S}^{2}}$, by
\Beq
\label{Eq_Application_Kernel_Function}
(\mathcal{P} V)(h)=\int_{\mathcal{C}_{\mathds{S}^{2}}} V(y) \mathcal{P}(h, \mathrm{d}y) \quad \mbox{and} \quad (\mathcal{P}^n V)(h)=\int_{\mathcal{C}_{\mathds{S}^{2}}} V(y) \mathcal{P}^n(h, \mathrm{d}y).
\Eeq
Our main result states the geometric ergodicity of the Markov chain $\left( ( X(t,\mathbf{x}) )_{\mathbf{x}\in 
\mathds{S}^{2}}\right) _{t\in \mathds{Z}}$. 

\begin{Th}[Geometric ergodicity]
\label{Prop_Geometric_Ergodicity} Let $\pi_{\star}$ be the unique invariant probability measure of the Markov chain $\left( ( X(t,\mathbf{x}) )_{\mathbf{x}\in 
\mathds{S}^{2}}\right) _{t\in \mathds{Z}}$. Furthermore, let, for $h\in \mathcal{C}_{
\mathds{S}^{2}}$, $L(h)=\left\Vert h^{\gamma }\right\Vert _{\infty }$, where $\gamma \in (0,1)$. Then there exist constants $C>0$ and $\rho \in (0,1)$ such that
\begin{equation*}
\left\Vert \mathcal{P}^{n}\varphi -\pi_{\star} (\varphi )\right\Vert _{L} \leq C\rho
^{n}\left\Vert \varphi -\pi_{\star} (\varphi )\right\Vert _{L}
\end{equation*}
holds for every measurable function $\varphi :\mathcal{C}_{\mathds{S}
^{2}}\mapsto \mathds{R}$ such that $\left\Vert \varphi \right\Vert
_{L}<\infty $.
\end{Th}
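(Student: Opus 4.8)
The plan is to verify the hypotheses of the Harris-type ergodic theorem for Markov chains on Polish state spaces proved in \cite{hairer2010p}, namely a geometric drift (Lyapunov) condition together with a minorization (``small set'') condition holding on the sublevel sets of the Lyapunov function. Once both are in force, that theorem yields a spectral gap for $\mathcal{P}$ in a weighted supremum norm with weight $1+\beta L$ for some $\beta>0$; since $1+\beta L$ and $1+L$ are comparable up to multiplicative constants, the corresponding norms are equivalent and the statement follows with the weight $1+L$ as written, the constant $C$ arising precisely from this change of weight. The natural Lyapunov function here is $V=L$ itself, where $L(h)=\|h^{\gamma}\|_{\infty}=\|h\|_{\infty}^{\gamma}$; note that $V$ is continuous, hence Borel measurable, on $\mathcal{C}_{\mathds{S}^{2}}$.

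For the drift, fix $h\in\mathcal{C}_{\mathds{S}^{2}}$ and use \eqref{Eq_Recurrence_Equation_Lag_1}. For $u,v\ge 0$ and $\gamma\in(0,1)$ we have $(\max\{u,v\})^{\gamma}=\max\{u^{\gamma},v^{\gamma}\}\le u^{\gamma}+v^{\gamma}$, so, pointwise in $\mathbf{x}$, $X(t,\mathbf{x})^{\gamma}\le a^{\gamma}h(R_{\theta,\mathbf{u}}\mathbf{x})^{\gamma}+(1-a)^{\gamma}Z(t,\mathbf{x})^{\gamma}$. Taking the supremum over $\mathbf{x}\in\mathds{S}^{2}$ and using that $R_{\theta,\mathbf{u}}$ is a bijection of $\mathds{S}^{2}$ gives $\|X(t,\cdot)^{\gamma}\|_{\infty}\le a^{\gamma}\|h^{\gamma}\|_{\infty}+(1-a)^{\gamma}\|Z(t,\cdot)^{\gamma}\|_{\infty}$, and taking expectations yields $(\mathcal{P}L)(h)\le a^{\gamma}L(h)+(1-a)^{\gamma}\mathbb{E}\big[\|Z\|_{\infty}^{\gamma}\big]$. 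To see that $K:=(1-a)^{\gamma}\mathbb{E}[\|Z\|_{\infty}^{\gamma}]$ is finite, recall that the von Mises--Fisher density $f(\cdot;\boldsymbol{\mu},\kappa)$ is bounded above by a finite constant $c_{\kappa}$ not depending on $\boldsymbol{\mu}$, so $\|Z\|_{\infty}=\sup_{\mathbf{x}}Z(\mathbf{x})\le c_{\kappa}\sup_{i}U_{i}$; keeping only the $u$-coordinates, the $U_{i}$ form a Poisson point process on $(0,\infty)$ with intensity $\lambda_{\mathds{S}^{2}}(\mathds{S}^{2})\,u^{-2}\mathrm{d}u$, so $\sup_{i}U_{i}$ is a.s. finite with a Fr\'echet-type (power-law, index $1$) tail and hence finite moments of every order strictly below $1$. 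As $\gamma<1$, this gives $\mathbb{E}[\|Z\|_{\infty}^{\gamma}]<\infty$, and the drift condition holds with contraction coefficient $a^{\gamma}\in(0,1)$.

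For the minorization the key point is that a sufficiently large innovation completely erases the memory of the past contained in the term $aX(t-1,R_{\theta,\mathbf{u}}\mathbf{x})$. Fix $R>0$ and let $h$ satisfy $L(h)\le R$, i.e.\ $\|h\|_{\infty}\le R^{1/\gamma}$. Since $f(\cdot;\boldsymbol{\mu},\kappa)$ is, in addition, bounded below by a positive constant $c'_{\kappa}$ not depending on $\boldsymbol{\mu}$, one has $\inf_{\mathbf{x}}Z(t,\mathbf{x})\ge c'_{\kappa}\sup_{i}U_{i}$, whence $P\big(\inf_{\mathbf{x}}Z(t,\mathbf{x})\ge M\big)\ge P\big(\sup_{i}U_{i}\ge M/c'_{\kappa}\big)>0$ for every $M>0$. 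On the event $\Gamma_{R}=\{(1-a)\inf_{\mathbf{x}}Z(t,\mathbf{x})\ge aR^{1/\gamma}\}$, whose probability $p_{R}\in(0,1)$ does not depend on $h$, we have $(1-a)Z(t,\mathbf{x})\ge a\|h\|_{\infty}\ge a\,h(R_{\theta,\mathbf{u}}\mathbf{x})$ for all $\mathbf{x}$, so by \eqref{Eq_Recurrence_Equation_Lag_1} the whole path satisfies $X(t,\cdot)=(1-a)Z(t,\cdot)$ in $\mathcal{C}_{\mathds{S}^{2}}$. Hence, for every $B\in\mathcal{B}(\mathcal{C}_{\mathds{S}^{2}})$, $\mathcal{P}(h,B)\ge P\big((1-a)Z(t,\cdot)\in B,\ \Gamma_{R}\big)=:\mu_{R}(B)$, where $\mu_{R}$ is a measure not depending on $h$ with total mass $p_{R}>0$; equivalently $\mathcal{P}(h,\cdot)\ge p_{R}\,\tilde\mu_{R}(\cdot)$ on $\{L\le R\}$, with $\tilde\mu_{R}=\mu_{R}/p_{R}$ a probability measure.

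It then remains to pick $R$ large enough that the quantitative compatibility condition between $R$, $K$ and the drift coefficient required in \cite{hairer2010p} is met (possible since the minorization above is valid for every $R>0$) and to invoke that result, which produces the constants $C>0$ and $\rho\in(0,1)$ and the asserted geometric convergence in $\|\cdot\|_{L}$ towards $\pi_{\star}(\varphi)$. I expect the minorization step to be the main obstacle: it is where one genuinely uses the structure of the model --- the strict positivity of the spectral density $f$ on the sphere and the unbounded support of the law of $\inf_{\mathbf{x}}Z(\mathbf{x})$ --- in order to overcome the persistent memory that the geometric weights $a^{j}$ would otherwise introduce, whereas the drift condition is essentially the subadditivity of $t\mapsto t^{\gamma}$ combined with a routine tail estimate for $\|Z\|_{\infty}$.
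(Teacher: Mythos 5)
Your proposal is correct and follows essentially the same route as the paper: you verify the Lyapunov drift condition for $L(h)=\Vert h^{\gamma}\Vert_{\infty}$ (the paper computes $K=(1-a)^{\gamma}\Vert f^{\gamma}\Vert_{\infty}\Gamma(1-\gamma)$ explicitly via the Fr\'echet law of $\Vert Z\Vert_{\infty}$, while you only bound $\mathds{E}[\Vert Z\Vert_{\infty}^{\gamma}]<\infty$, which suffices) and you exploit the uniform positive lower bound of the von Mises--Fisher density together with arbitrarily large innovations to control the kernel on level sets of $L$, exactly as in the paper's verification of Assumption 3.4 of \cite{hairer2010p}. Your genuine minorization $\mathcal{P}(h,\cdot)\geq\mu_{R}$ is a slightly stronger (and equally valid) formulation of the paper's total-variation coupling bound, and the final appeal to Theorem 3.6 of \cite{hairer2010p} is the same.
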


\begin{proof}
We show that the two assumptions required in Theorem 3.6 in \cite{hairer2010p} are satisfied. 

\medskip 

\noindent \textbf{Assumption 1} \citep[Assumption 3.1 in][]{hairer2010p}\textbf{.}
\textit{There exists a function $L:\mathcal{C}_{\mathds{S}^{2}}\rightarrow \lbrack 0,\infty )$ and constants $K\geq 0$ and $\beta \in (0,1)$ such that
$$
(\mathcal{P}L)(h) \leq \beta L(h)+K,
$$
for all $h\in \mathcal{C}_{\mathds{S}^{2}}$.
} 

\medskip

The function $L$ is here $L(h)=\left\Vert h^{\gamma}\right\Vert _{\infty }, h\in \mathcal{C}_{
\mathds{S}^{2}}$, where $\gamma \in (0,1)$. Using the first part of \eqref{Eq_Application_Kernel_Function} and the fact that $\left( ( X(t,\mathbf{x}) )_{\mathbf{x}\in \mathds{S}^{2}}\right)_{t\in \mathds{Z}}$ is a time-homogeneous Markov chain, we see that, for all $h\in \mathcal{C}_{\mathds{S}^{2}}$ and $t \in \mathds{Z}$,
\Beq
\label{Eq_Expression2_Application_Kernel_Function}
(\mathcal{P}L)(h)= \mathds{E}[L(X(t,\cdot))|X(t-1,\cdot)=h(\cdot)].
\Eeq
Now, observe that, if we denote $\left( \Gamma_{i}\right)_{i\geq 1}=\left( U_{i}^{-1}\right) _{i\geq 1}$, where the $\left( U_{i}\right)_{i\geq 1}$ are as in \eqref{Eq_Definition_Z}, then the $\left( \Gamma_{i}\right) _{i\geq 1}$ are the points of an homogeneous Poisson point process on $(0, \infty)$ with constant intensity equal to one. Hence, the highest $U_i$ corresponds to the smallest $\Gamma_i$, which follows the standard exponential distribution. Thus, its inverse follows the standard Fr\'echet distribution. Moreover, for $f$ defined in \eqref{Eq_von_Mises_Fisher}, $\| f \|_{\infty}$ is reached for $\Mb{x}=\boldsymbol{\mu}$ and is finite. Therefore, the process $Z$ defined in \eqref{Eq_Definition_Z} satisfies
\begin{equation}
\label{Eq_1_Proof_Th4}
\left\Vert Z^{\gamma }\right\Vert _{\infty }\overset{d}{=}Y^{\gamma
}\left\Vert f^{\gamma }\right\Vert _{\infty },
\end{equation}
where $Y$ is a random variable with standard Fr\'{e}chet distribution. Moreover, for all $t \in \mathds{Z}$, 
\begin{eqnarray}
\left\Vert \max \{ ah(R_{\theta, \Mb{u} }\cdot),(1-a)Z(t, \cdot) \}^{\gamma
}\right\Vert _{\infty } &=&\left\Vert \max \{ a^{\gamma }h^{\gamma }(R_{\theta, \Mb{u}}\cdot),(1-a)^{\gamma }Z^{\gamma }(t, \cdot) \}\right\Vert _{\infty } \nonumber \\
&= &\max \{ a^{\gamma }\left\Vert h^{\gamma }\right\Vert _{\infty
},(1-a)^{\gamma }\left\Vert Z^{\gamma }\right\Vert _{\infty } \}.
\label{Eq_2_Proof_Th4}
\end{eqnarray}
Using \eqref{Eq_Recurrence_Equation_Lag_1}, \eqref{Eq_Expression2_Application_Kernel_Function}, \eqref{Eq_1_Proof_Th4}, \eqref{Eq_2_Proof_Th4} and denoting by $\Gamma$ the gamma function, we obtain, for all $h\in \mathcal{C}_{\mathds{S}^{2}}$ and $t \in \mathds{Z}$,
\begin{align*}
(\mathcal{P}L)(h) &=\mathds{E} [\max \{ a^{\gamma }\left\Vert h^{\gamma }\right\Vert
_{\infty },(1-a)^{\gamma }\left\Vert Z^{\gamma }\right\Vert _{\infty } \} ] 
\\&= a^{\gamma }\left\Vert h^{\gamma }\right\Vert _{\infty }\mathds{P} \left(
Y^{\gamma } \leq \frac{ a^{\gamma }\left\Vert h^{\gamma }\right\Vert _{\infty
}} {(1-a)^{\gamma }\left\Vert f^{\gamma } \right\Vert_{\infty} } \right) +(1-a)^{\gamma } \| f^{\gamma} \|_{\infty}
\mathds{E}  \left[ Y^{\gamma }\mathds{I}_{ \left \{ Y^{\gamma }\geq \frac{ a^{\gamma }\left\Vert
h^{\gamma }\right\Vert _{\infty }} { (1-a)^{\gamma }\left\Vert f^{\gamma
}\right\Vert_{\infty} } \right \} } \right] 
\\&\leq a^{\gamma }\left\Vert h^{\gamma }\right\Vert _{\infty }+(1-a)^{\gamma
} \| f^{\gamma} \|_{\infty} \Gamma (1-\gamma ) 
\\& = \beta L(h)+K,
\end{align*}
where $\beta=a^{\gamma } \in (0,1)$ (since $a \in (0,1)$) and $K=(1-a)^{\gamma } \| f^{\gamma} \|_{\infty} \Gamma (1-\gamma ) \geq 0$ (since $\Gamma (1-\gamma )>0$).
Hence, Assumption 1 is satisfied.

\medskip 

\noindent \textbf{Assumption 2} \citep[Assumption 3.4 in][]{hairer2010p}\textbf{.}
\textit{
We denote by $\| . \|_{TV}$ the total variation distance between two probability measures.
For every $R>0$, there exists a constant $\alpha >0$ such that
\Beq
\label{Eq_Assumption2}
\sup_{h_{1},h_{2}\in D_R} \{ \left\Vert \mathcal{P}(h_1,\cdot)-\mathcal{P}(h_2,\cdot) \right\Vert _{TV} \} \leq 2(1-\alpha ),
\Eeq
where $D_R=\{h_{1},h_{2}:L(h_{1})+L(h_{2})\leq R\}$.}

\medskip

Remark 3.5 in \cite{hairer2010p} gives that Condition \eqref{Eq_Assumption2} is equivalent to the fact that
$$ | (\mathcal{P}\varphi)(h_1)-(\mathcal{P}\varphi)(h_2) | \leq 2(1-\alpha )$$
holds uniformly on $\mathcal{G}= \left \{ \varphi: \mathcal{C}_{\mathds{S}^2} \to \mathds{R} : \varphi \mbox{ measurable and } \left\Vert \varphi \right\Vert _{\infty }\leq 1 \right \}$. Consequently, taking advantage of \eqref{Eq_Expression2_Application_Kernel_Function}, we see that we need to prove that, for all $t \in \mathds{Z}$,
$$ \sup_{h_{1},h_{2}\in D_R,\varphi \in \mathcal{G}} \{  \left \vert \mathds{E}\left[ \varphi (X(t,\mathbf{\cdot }))|X(t-1,\mathbf{
\cdot })=h_{1}(\cdot )\right] -\mathds{E}\left[ \varphi (X(t,\mathbf{\cdot }
))|X(t-1,\mathbf{\cdot }) 
=h_{2}(\cdot )\right] \right\vert \} 
\leq 2(1-\alpha ).
$$
Using \eqref{Eq_Recurrence_Equation_Lag_1}, we have, for all $\varphi \in \mathcal{G}$, $h_{1},h_{2}\in D_R$ and $t \in \mathds{Z}$, that
\begin{align}
& \ \ \ \  \left\vert \mathds{E}\left[ \varphi (X(t,\mathbf{\cdot }))|X(t-1,\mathbf{
\cdot })=h_{1}(\cdot )\right] -\mathds{E}\left[ \varphi (X(t,\mathbf{\cdot }
))|X(t-1,\mathbf{\cdot })=h_{2}(\cdot )\right] \right\vert  \nonumber
\\&= \left\vert \mathds{E}\left[ \varphi ( (\max \{ ah_{1}(R_{\theta, \Mb{u} }\mathbf{x}
),(1-a)Z(t, \mathbf{x})\})_{\Mb{x} \in \mathds{S}^2} )-\varphi ( ( \max \{ ah_{2}(R_{\theta, \Mb{u} }\mathbf{x}),(1-a)Z(t, 
\mathbf{x}) \})_{\Mb{x} \in \mathds{S}^2} )\right] \right\vert  \nonumber
\\&\leq \mathds{E}\left[ \left\vert \varphi ( ( \max \{ ah_{1}(R_{\theta, \Mb{u} }\mathbf{x
}),(1-a)Z(t, \mathbf{x})\})_{\Mb{x} \in \mathds{S}^2} )-\varphi ( ( \max \{ ah_{2}(R_{\theta, \Mb{u} }\mathbf{x}),(1-a)Z(t,
\mathbf{x})\})_{\Mb{x} \in \mathds{S}^2})\right\vert \right].
\label{Eq_3_Proof_Th4}
\end{align}
Moreover, for $p,q,z\in \mathcal{C}_{\mathds{S}^{2}}$, it is clear that if, for all $\mathbf{x} \in \mathds{S}^{2}$,
$\max \{ p(\mathbf{x}), q(\mathbf{x}) \} \leq z(\mathbf{x})$,
then, for all $\varphi \in \mathcal{G}$,
$$\left\vert \varphi ( ( \max \{ p(\Mb{x}),z(\Mb{x}) \} )_{\Mb{x} \in \mathds{S}^2})-\varphi ( (\max \{ q(\Mb{x}),z(\Mb{x}) \})_{\Mb{x} \in \mathds{S}^2})\right\vert =0.$$
Therefore, for all $\varphi \in \mathcal{G}$, using the fact that $\| \varphi \|_{\infty} \leq 1$, we have that 
\begin{align}
&\quad \ \mathds{E}\left[ \left\vert \varphi ( (\max \{ ah_{1}(R_{\theta, \Mb{u} }\mathbf{x}
),(1-a)Z(t, \mathbf{x}) \})_{\Mb{x} \in \mathds{S}^2})-\varphi ( (\max \{ ah_{2}(R_{\theta, \Mb{u} }\mathbf{x}),(1-a)Z(t,
\mathbf{x}) \})_{\Mb{x} \in \mathds{S}^2}) \right\vert \right] \nonumber  
\\&\leq 2\left[ 1-\mathds{P}\left( \bigcap_{\Mb{x} \in \mathds{S}^{2}} \left \{ \left \{ Z(t, \mathbf{x})\geq \frac{a}{
(1-a)} \max \{ h_{1}(R_{\theta, \Mb{u} }\mathbf{x}), h_{2}(R_{\theta, \Mb{u} }\mathbf{x}) \} \right \} \right \} \right)
\right]  \nonumber
\\& \leq 2 \left[1 - \mathds{P}\left( \inf_{\mathbf{
x} \in \mathds{S}^{2}} \{ Z(t, \mathbf{x}) \} \geq \frac{a}{(1-a)} \max\{ \Vert h_{1} \Vert _{\infty
}, \left\Vert h_{2}\right\Vert _{\infty }\} \right) \right].
\label{Eq_4_Proof_Th4}
\end{align}
The quantity $\inf_{\mathbf{x}\in \mathds{S}^{2}} \{ f(\Mb{x}; \boldsymbol{\mu}_1, \kappa) \}$ is reached for $\Mb{x}=-\boldsymbol{\mu}_1$. Hence, the process $Z$ defined in \eqref{Eq_Definition_Z} satisfies, for all $\Mb{x} \in \mathds{S}^2$,
$$
Z(\mathbf{x}) \geq \Gamma_{1}^{-1}f(\mathbf{x}; \boldsymbol{\mu }_{1},\kappa ) \geq \Gamma_{1}^{-1} \inf_{\mathbf{x}\in \mathds{S}^{2}} \{ f(\mathbf{x};\boldsymbol{\mu }
_{1},\kappa ) \} =\Gamma_{1}^{-1}\frac{\kappa }{4\pi \sinh(\kappa) }\exp \left(
-\kappa \right),
$$
with $\Gamma_1=U_1^{-1}$, where $U_1$ appears in the definition of $Z$.
Since the $(Z(t,\mathbf{x}))_{\mathbf{x}\in \mathds{S}^{2}}$, $t \in \mathds{Z}$, are independent replications of the process $(Z(\Mb{x}))_{\Mb{x} \in \mathds{S}^2}$, it follows, noting that $\max \{ \left\Vert h_{1}\right\Vert _{\infty }, \left\Vert
h_{2}\right\Vert _{\infty } \} \leq R$, that, for all $t \in \mathds{Z}$,
\begin{align}
&\quad \ \mathds{P}\left( \inf_{\mathbf{x} \in \mathds{S}^{2}} \{ Z(t, \mathbf{x}) \} \geq \frac{a}{(1-a)}
\max \{ \left \Vert h_{1}\right\Vert _{\infty }, \left\Vert h_{2}\right\Vert
_{\infty } \} \right) \nonumber
\\&\geq \mathds{P}\left( \Gamma_{1}^{-1}\frac{\kappa }{4\pi \sinh(\kappa) }\exp
\left( -\kappa \right) \geq \frac{a}{(1-a)} \max \left \{ \left\Vert h_{1}\right\Vert
_{\infty }, \left\Vert h_{2}\right\Vert _{\infty } \right \} \right)  \nonumber
\\& \geq \mathds{P}\left( \Gamma_{1}^{-1}\geq \frac{4\pi \sinh(\kappa) a}{\kappa
(1-a)}\exp \left( \kappa \right) R\right).
\label{Eq_5_Proof_Th4}
\end{align}
Therefore, combining \eqref{Eq_3_Proof_Th4}, \eqref{Eq_4_Proof_Th4} and \eqref{Eq_5_Proof_Th4}, we obtain, for all $t \in \mathds{Z}$ that,
\begin{align*}
& \quad \  \sup_{h_{1},h_{2}\in D_R, \varphi \in \mathcal{G}}\left\vert \mathds{E}\left[ \varphi (X(t,\mathbf{\cdot }))|X(t-1,\mathbf{
\cdot })=h_{1}(\cdot )\right] -\mathds{E}\left[ \varphi (X(t,\mathbf{\cdot }
))|X(t-1,\mathbf{\cdot })=h_{2}(\cdot )\right] \right\vert
\\& \leq 2 \left( 1- \mathds{P}\left( \Gamma_{1}^{-1}\geq \frac{4\pi \sinh(\kappa) a}{\kappa
(1-a)}\exp \left( \kappa \right) R\right) \right)=2(1-\alpha),
\end{align*}
denoting $$\alpha=\mathds{P}\left( \Gamma_{1}^{-1}\geq \frac{4\pi \sinh(\kappa) a}{\kappa (1-a)}\exp \left( \kappa \right) R\right)>0.$$
Hence, Assumption 2 holds.

\medskip

Finally, the application of Theorem 3.6 in \cite{hairer2010p} yields the result.
\end{proof}
The geometric ergodicity result of Theorem \ref{Prop_Geometric_Ergodicity} has two strong implications. First, let $x_0 \in \mathcal{C}_{\mathds{S}^{2}}$ be a realisation of the Markov chain $\left( ( X(t,\mathbf{x}) )_{\mathbf{x}\in \mathds{S}^{2}}\right)_{t\in \mathds{Z}}$ at a date $t_0 \in \mathds{Z}$. Geometric ergodicity implies that, for any $B \in \mathcal{B}(\mathcal{C}_{\mathds{S}^{2}})$,  $\lim_{t \to \infty} \mathds{P}( (X(t_0+t, \Mb{x}))_{\mathbf{x}\in \mathds{S}^{2}} \in B | (X(t_0, \Mb{x}))_{\mathbf{x}\in \mathds{S}^{2}}=x_0)=\pi_{\star}(B)$; note that the rate of this convergence is geometric. Hence, geometric ergodicity can be viewed as a loss of memory property. Second, assume that $X$, the Markov chain specified by \eqref{Eq_Recurrence_Equation_Lag_1}, is now defined with $t \in \mathds{N}$ and any initial (at time $t=0$) distribution (probability measure on $\mathcal{C}_{\mathds{S}^{2}}$) which is different from the invariant probability measure $\pi_{\star}$. Geometric ergodicity entails that the distribution of the chain at a given date tends at a geometric rate to $\pi_{\star}$.

It is worth highlighting the fact that the results of this section remain valid for all models belonging to the generalised subclass of models of type 2 as soon as $g$ is as in \eqref{Eq_Function_g} and $\tilde{f}$ is positive, continuous and satisfies the adapted version of \eqref{Eq_Stability_Fisher_vonMises}.

We conclude this section with the following remark, which shows that geometric ergodicity (in the sense of Theorem \ref{Prop_Geometric_Ergodicity}) can be shown for Markov chains defined in a similar way as in \eqref{Eq_Recurrence_Equation_Lag_1} but with a process $Z$ which is not necessarily max-stable.
\begin{Rq}
For $a \in (0,1)$, we now consider, provided it exists, the Markov chain $\left( ( X(t,\mathbf{x}) )_{\mathbf{x}\in \mathds{S}^{2}}\right)_{t\in \mathds{Z}}$ satisfying, for all $t \in \mathds{Z}$ and $\Mb{x} \in \mathds{S}^{2}$,
\Beq
\label{Eq_Definition_Markov_Chain_Last_Remark}
X(t,\mathbf{x})=\max \{ a X(t-1,R_{\theta ,\mathbf{u}}\mathbf{x}),(1-a)Z(t,\mathbf{x}) \},
\Eeq
where the $(Z(t,\mathbf{x}))_{\mathbf{x}\in \mathds{S}^{2}}$, $t \in \mathds{Z}$, are independent replications of a bounded, a.s. positive and sample-continuous stochastic process $(Z(\Mb{x}))_{\Mb{x} \in \mathds{S}^2}$. The same result as in Theorem \ref{Prop_Geometric_Ergodicity} can be obtained for the Markov chain $X$ with similar arguments. Denoting by $\mathcal{K}$ some compact set, the same holds true for, provided it exists, the Markov chain $\left( ( X(t,x) )_{x \in \mathcal{K}}\right)_{t\in \mathds{Z}}$ satisfying, for all $t \in \mathds{Z}$ and $x \in \mathcal{K}$,\footnote{The set $\mathcal{K}$ is not necessarily a subset of $\mathds{R}^d$, whence the notation $x$ instead of $\Mb{x}$.}
$$ X(t,x)=\max \{ a X(t-1, x),(1-a)Z(t,x) \},$$
where the $(Z(t,x))_{x \in \mathcal{K}}$, $t \in \mathds{Z}$, are as in \eqref{Eq_Definition_Markov_Chain_Last_Remark} apart from the replacement of $\mathds{S}^{2}$ with $\mathcal{K}$. We thank the referee for pointing this out.
\end{Rq}

\section{Conclusion}
\label{Sec_Conclusion}

The main result of this paper concerns geometric ergodicity of some Markov chains induced by processes belonging to the class of space-time max-stable models with spectral separability introduced in \cite{embrechts2016space}. Since the associated space state is not locally compact, we could not use the classical approach described, e.g., in \cite{meyn2009markov} and had to apply results for Markov chains with Polish state spaces to be found in \cite{hairer2010p}. Some future challenging work might consist in investigating whether or not the Markov chains described in \cite{embrechts2016space}, Section 3.1, are geometrically ergodic. They take values in the space of continuous functions from $\mathds{R}^2$ to $(0, \infty)$. The latter being non locally compact and not Polish (since non separable), neither the results in \cite{meyn2009markov} nor those from \cite{hairer2010p} can be used in that case.

\section*{Acknowledgements} 
Erwan Koch would like to thank RiskLab at ETH Zurich, the Swiss Finance Institute and the Swiss National Science Foundation (project number 200021\_178824) for financial support. Both authors would like to acknowledge Paul Embrechts for interesting discussions and the referee for insightful suggestions.

\newpage
\bibliographystyle{apalike}
\bibliography{Extension_MARMA_Spatial}

\end{document}